\documentclass{article}

\usepackage{amsmath,amsfonts,amssymb,amsthm,pstricks,pst-plot,pst-node,eucal}
\usepackage[dvips]{graphics}

\newtheorem{theorem}{Theorem}[section]
\newtheorem{lemma}[theorem]{Lemma}

\newtheorem{remark}[theorem]{Remark}

\newtheorem{proposition}[theorem]{Proposition}

\newcommand{\ZZ}{\mathbb Z}
\newcommand{\ZZt}{\mathbb Z_2}
\newcommand{\CC}{\mathbb C}
\newcommand{\Bn}{\mathbf n}
\newcommand{\Bz}{\mathbf z}
\newcommand{\fgl}{\mathfrak{gl}}
\newcommand{\gl}{U_q(\fgl(1|1))}
\newcommand{\Span}{\mathrm{span}}
\newcommand{\wedgedot}{\sideset{}{^\bullet}\bigwedge}
\newcommand{\Br}{\mathbb{B}}

\newcommand{\Cl}{\operatorname{Cl}}

\date{}
\begin{document}
\title{Schur-Weyl type duality for quantized
    $\fgl(1|1)$,\\ the Burau representation of braid groups\\ and
    invariants of tangled graphs}
\maketitle
\vspace{-1.5cm}
\begin{center}
{\large \sc Nicolai Reshetikhin\footnote{Supported by the NSF grant DMS-0601912,
and by DARPA}} \\
University of California at
Berkeley  and \\
KDV Institute for Mathematics,
Universiteit van Amsterdam
\bigskip

{\large \sc Catharina Stroppel}\\
     Mathematik Zentrum, Universit\"at Bonn
\bigskip

{\large \sc Ben Webster\footnote{Supported by an NSF postdoctoral
    fellowship.}}\\
Massachusetts Insitute of Technology and 
University of Oregon 
\end{center}

\begin{abstract}
  We show that the Schur-Weyl type duality between $\fgl(1|1)$ and
  $GL_n$ gives a natural representation-theoretic setting for the
  relation between reduced and non-reduced Burau representations.
\%end{quote}
\end{abstract}

\section*{Introduction}

The goal of this note is to clarify the relation between
reduced Burau representations of braid groups, non-reduced Burau representations, and the representation of the braid group
defined by $R$-matrices related to $\gl$.

A lot is known about the relation between the quantized universal
enveloping algebra $\gl)$ of the Lie superalgebra of
$\fgl(1|1)$, multivariable Alexander-Conway polynomials on links, and
the Burau-Magnus representations of braid groups.

In this paper we show that the Schur-Weyl type duality between
$\fgl(1|1)$ and $GL_n$ gives a natural representation-theoretic setting
for the relation between reduced and non-reduced Burau
representations. We use this simple fact as an excuse to sum up some
known (but partly folklore) facts about these representations and the invariants of knots.

In Section 1 we recall the definition of and basic facts about quantized $\fgl(1|1)$.  Section
2 describes the duality between $GL_n$ and $\gl$. In Section
3 we show how the Burau representation naturally reduces on the space
of multiplicities. Section 4 relates the Alexander-Conway polynomial
to the trace on the multiplicity space.

{\it Acknowledgements:} The authors are grateful for the hospitality at the Mathematics
Department of Aarhus University where this work was completed
and to the Niels Bohr grant from the Danish National
Research Foundation for the support. We are also grateful to H. Queffelec for carefully reading the
first draft of this paper and correcting an important sign error.

\section{Quantum $\fgl(1|1)$ and its representations}
\subsection{} Consider the Lie superalgebra $\mathfrak{g}=\mathfrak{gl}(1|1)$. Explicitly, this means we consider the super vector space $M$ of all complex $2\times 2$-matrices with even part $M_{0}$ spanned by the matrix units $E_{1,1}$ and $E_{2,2}$, and odd part $M_1$ spanned by the matrix units $E_{1,2}$ and $E_{2,1}$, equipped with the Lie superalgebra structure given by the super commutator. The universal enveloping super algebra $U(\mathfrak{g})$ has a quantum version $U_h(\fgl(1|1))$ defined as follows (see e.g. \cite{Kulish}):

Let $\CC[[h]]$ denote the ring of formal power series in $h$. The $\CC[[h]]$-super algebra $U_h(\fgl(1|1))$ is generated freely as a $\CC[[h]]$-algebra by (odd) elements $X, Y$ and (even) elements $G, H$ modulo the defining relations:
\[\{X,Y\}=e^{hH}-e^{-hH},\quad X^2=Y^2=0,\]
\[[G,X]=X, \ \ [G, Y]=-Y
\]
\[[H,X]=0\ \ [H,Y]=0,\ \ [H,G]=0\],
(using the common abbreviation $[A,B]:=AB-BA$ and $\{A,B\}=AB+BA$.)

$U_h(\fgl(1|1))$ is a Hopf superalgebra with the comultiplication
\[
\Delta X=X\otimes e^{\frac{hH}{2}}+e^{-\frac{hH}{2}}\otimes X,
\ \ \Delta Y=Y\otimes e^{\frac{hH}{2}}+e^{-\frac{hH}{2}}\otimes Y
\]
\[
\Delta H=H\otimes 1+ 1\otimes H, \ \ \Delta G=G\otimes 1 +1\otimes G
\]
The Hopf superalgebra is quasi-triangular with $R$-matrix
\begin{equation}\label{uni-R}
R=\exp(h(H\otimes G+G\otimes H))(1- e^{\frac{hH}{2}}X\otimes e^{-\frac{hH}{2}}Y)
\end{equation}
That is this element satisfies the following identities:
\[
\Delta(a)^{op}=R\Delta(a)R^{-1}
\]
and
\[
(\Delta\otimes \operatorname{id})(R)=R_{13}R_{23}, \ \ (\operatorname{id} \otimes \Delta)(R)=
R_{13}R_{12}
\]

There is an integral form $\gl\subset U_h(\fgl(1|1))$, which is
generated by $X, Y, G$ and the invertible element $t=e^{\frac{hH}{2}}$
as a  $\CC[e^h,e^{-h}]$-algebra.  As usual, we write $q=e^h$.

\subsection{}
Recall that there is up to isomorphism precisely one irreducible $\mathfrak{gl}(2)$-module of a fixed dimension $n$ (for instance the natural representation for $n=2$). In contrast, the algebra $U_q(gl(1|1)$ has $2$-(complex) parameter family
of irreducible representations on $\CC^{1|1}$  for $z\in\mathbb{C}^*,n\in\mathbb{C}$ denote by $V_{z,n}$ the
irreducible $2$-dimensional representation $V_{z,n}=\CC v\oplus \CC u$
with $v$ even and $u$ odd such that
\begin{equation}\label{2-irrep}
Xv=u, \ \ Yv=0, \ \ Gv=nv,  \ \ tv=zv,
\end{equation}
(from which $Xu=0$, $Gu=(n+1)u$, $Yu=(z^2-z^{-2})v$ and $tu=zu$ follows).
Obviously, one can also consider the representation $\Pi V_{z,n}$ with the parity of the elements reversed. The representation $\Pi V_{z,n}$ can be realized as $\epsilon\otimes V_{z,n}$ where
$\epsilon$ is an odd one-dimensional representation. These representations and their tensor products will in fact be essentially the only $\fgl(1|1)$-representations
of interest to us. For more details on the representation theory see e.g. \cite[\S 11]{V}.

\subsection{}

Let $V$ be a finite dimensional representation of $\gl$.
It decomposes into a direct sum of weight spaces for $G$,
\[
V=\bigoplus_{n\in \CC} V(n).
\]
Note that we do not assume the weights to be integral. As usual, the elements $X$ and $Y$ act from one weight space to another
\[
X:V(n)\to V(n+1), \hspace{.5in} Y: V(n)\to V(n-1),
\]
and we have $X^2=Y^2=0$. Hence, $V$ can be viewed as a complex with two
differentials acting in opposite directions.  The DeRham complex of
any K\"ahler manifold carries an action of $\fgl(1|1)$, such that the element $H$ acts
as the  Laplace operator.
Thus, the algebra $U_h(\fgl(1|1))$, and for the same reasons
$U(\fgl(1|1))$, is in a certain sense, an abstraction of the
structures of Hodge theory.

These are, in fact, isomorphic as algebras; the difference between
them lies in the action of the differential on $V\otimes
W$: the usual, diagonal action for $U(\fgl(1|1))$, the comultiplication for $\gl$ gives another action.

Alternatively, any $\fgl(1|1)$-representation can be thought of as a matrix
factorization 
with extra structure (primarily, an upgrade of
the $\ZZt$-grading to a $\ZZ$-grading).  The underlying super vector space
remains unchanged, with $X+Y$ giving the differential,
and $$(X+Y)^2=\{X,Y\}= t^2-t^{-2}$$ as the potential.

\section{The decomposition of the tensor product}
\subsection{}
Let $\Cl_N$ be the Clifford algebra (over $\mathbb{C}$) with $2N$ generators:
\[
\Cl_N=\langle a_i, b_i, i=1,\dots N| \{a_i,a_j\}=\{b_i,b_j\}=0,\{a_i,b_j\}=\delta_{ij}\rangle
\]
The algebra $\Cl_N$ has an irreducible $2^N$-dimensional representation
$U_N$ generated by a cyclic vector $v$ with $b_iv=0$.
We might identify the basis vectors with the set of $\{0,1\}$-sequences of length $N$, such that $v=(0,0,0,\ldots 0)$ and $a_i$ annihilates all basis vectors $S=(s_1,s_2,\ldots s_N)$ with $s_i=1$, and otherwise sends $S$ to $(-1)^{\sum_{k=1}^{i-1}s_k}S'$ where $S'$ differs from $S$ exactly in the
$i^{th}$-entry. If we consider the subspace $U=\Span{\langle a_1,\ldots,a_n\rangle}$ of $\Cl_N$ then there is a
natural isomorphism of graded vector spaces:
\begin{eqnarray}\label{UN}
U_N&\longrightarrow&\bigwedge^\bullet U\\
S=(s_1,s_2,\ldots s_N)&\longmapsto&a_{j_1}\wedge a_{j_2}\wedge \cdots \wedge a_{j_k}
\end{eqnarray}
where $s_{j_1}, s_{j_2},\ldots, s_{j_k}$ are precisely the $1$'s appearing (in this order) in $S$. The action of $a_i$ gets turned into $x\mapsto a_i\wedge_x$).

In case $N=1$, $U_N$ is $2$-dimensional, and the irreducible $2$-dimensional representation \eqref{2-irrep} is obtained by pulling  back the Clifford algebra action to $\gl$ via the algebra homomorphism $\gl\to \Cl_1$
\[
X\mapsto a_1, \ \ Y\mapsto (z-z^{-1})b_1, \ \ t\mapsto z, \ \ G\mapsto n+ a_1b_1
\]
This formalism can be extended to the $N$-fold tensor product (via the comultiplication $\Delta$) of these representations:
\begin{proposition}
Let $V(\Bn, \Bz)=V_{z_1,n_1}\otimes \cdots \otimes V_{z_N,n_N}$. Then the mapping
\begin{equation*}
\begin{array}{llllll}
X&\mapsto& \sum_{i=1}^N z_1^{-1}\dots z_{i-1}^{-1}z_{i+1}\dots z_Na_i,&t&\mapsto&z_1\dots z_N,\\
Y&\mapsto&\sum_{i=1}^N  z_1^{-1}\dots z_{i-1}^{-1}(z_i^2-z_i^{-2})z_{i+1}\dots z_Nb_i&G&\mapsto&\sum_{i=1}^N (n_i+ a_ib_i).
\end{array}
\end{equation*}
defines uniquely an algebra homomorphism $\Phi_{\Bn,\Bz}:\gl\to \Cl_N$. Pulling back via this map the representation $U_N$ of $\Cl_N$ gives the tensor product representation $V(\Bz, \Bn)$.
\end{proposition}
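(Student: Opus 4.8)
The plan is to prove the two assertions of the proposition separately: first that $\Phi_{\Bn,\Bz}$ respects the defining relations of $\gl$, so that it is a well-defined algebra homomorphism, and then that the pulled-back action on $U_N$ coincides with the tensor product action. I would establish the first by a direct calculation inside $\Cl_N$, and the second by computing the $N$-fold coproduct and comparing it with $\Phi_{\Bn,\Bz}$ on the four generators.

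For the homomorphism property, abbreviate $\Phi=\Phi_{\Bn,\Bz}$ and write $c_i=z_1^{-1}\cdots z_{i-1}^{-1}z_{i+1}\cdots z_N$ and $d_i=(z_i^2-z_i^{-2})c_i$, so that $\Phi(X)=\sum_i c_ia_i$ and $\Phi(Y)=\sum_i d_ib_i$. The image of $t$ is a scalar and hence central, while $\Phi(X)^2=\Phi(Y)^2=0$ is immediate from $\{a_i,a_j\}=\{b_i,b_j\}=0$. The one relation needing genuine work is the anticommutator: from $\{a_i,b_j\}=\delta_{ij}$ one finds $\{\Phi(X),\Phi(Y)\}=\sum_i c_id_i$, and I expect these summands to telescope. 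Concretely, setting $Q_i=\prod_{k\le i}z_k^{-2}\prod_{k>i}z_k^2$ one verifies $c_id_i=Q_{i-1}-Q_i$, so that $\sum_i c_id_i=Q_0-Q_N=\prod_k z_k^2-\prod_k z_k^{-2}$, which is exactly the image of $t^2-t^{-2}$. The weight relations I would reduce to the number-operator identities $[a_jb_j,a_i]=\delta_{ij}a_i$ and $[a_jb_j,b_i]=-\delta_{ij}b_i$ in $\Cl_N$, which give $[\Phi(G),\Phi(X)]=\Phi(X)$ and $[\Phi(G),\Phi(Y)]=-\Phi(Y)$.

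For the identification of modules, I would first use the isomorphism \eqref{UN} together with the (super) tensor decomposition $\Cl_N\cong\Cl_1^{\otimes N}$, under which the generator $a_i$ acts on $U_N\cong U_1^{\otimes N}$ as $a$ in the $i$-th slot, carrying the Koszul sign $(-1)^{\sum_{k<i}s_k}$ coming from the parities of the earlier factors. Then, since $t$ is group-like and $X,Y$ are skew-primitive relative to it, coassociativity gives $\Delta^{(N)}t=t\otimes\cdots\otimes t$, $\Delta^{(N)}G=\sum_i 1\otimes\cdots\otimes G\otimes\cdots\otimes 1$, and $\Delta^{(N)}X=\sum_i t^{-1}\otimes\cdots\otimes t^{-1}\otimes X\otimes t\otimes\cdots\otimes t$ with $X$ in the $i$-th slot, and similarly for $Y$. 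Evaluating slot by slot through the $N=1$ homomorphism, on which $t$ acts by $z_i$, $X$ by $a$, $Y$ by $(z_i^2-z_i^{-2})b$ and $G$ by $n_i+ab$ (cf. \eqref{2-irrep}), reproduces precisely $\Phi(t)$, $\Phi(G)$, $\Phi(X)$ and $\Phi(Y)$; as the two $\gl$-actions on $U_N$ then agree on all generators, they coincide. The step I expect to be the real obstacle is the last sign matching: one must confirm that the Koszul signs generated when the odd operators $X$ and $Y$ are applied to a slot of the super tensor product are exactly the signs $(-1)^{\sum_{k<i}s_k}$ built into the $\Cl_N$-action on $U_N$. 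This is the only place where the super structure of the coproduct, as opposed to an ordinary tensor product, actually enters, and it is where a sign error would most easily slip in.
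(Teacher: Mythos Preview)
Your proposal is correct and follows exactly the approach the paper indicates: the paper's proof consists of the two sentences ``One easily verifies that the map is compatible with the relations of $\gl$. The second statement follows then also by explicit calculations,'' and you have simply carried out those verifications in detail, including the telescoping of $\{\Phi(X),\Phi(Y)\}$ and the matching of Koszul signs under the identification $U_N\cong U_1^{\otimes N}$.
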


\begin{proof}
One easily verifies that the map is compatible with the relations of $\gl$. The second statement follows then also by explicit calculations.
\end{proof}

\subsection{}
The vector $v_N=v\otimes \dots\otimes v\in V(\Bz, \Bn)$ is a lowest weight vector of lowest weight $\lambda=\sum_{i=1}^N n_i$, i.e. $Yv_N=0$ and $Gv_N=\lambda v_N$.

The subspaces $U=\Span\langle a_1,\ldots,a_N\rangle$ and
$U'=\Span\langle b_1,\ldots, b_N\rangle$ of $\Cl_N$ can be paired via $U\otimes U'\to \CC$, $a_j\otimes b_i\mapsto \delta_{i,j}$.  Abbreviate $\Phi=\Phi_{\Bn,\Bz}$ and let $W=(\CC \Phi(Y))^\perp$ and $W'=(\CC \Phi(X))^\perp$.

\begin{lemma}
Let $z:=z_1z_2\cdots z_N$. Assume $z^2-z^{-2}\not=0$. Then
\begin{enumerate}
\item $U=\CC \Phi(X)\oplus W$, and $U'=\CC \Phi(Y)\oplus W'$.
\item The subspaces $W,W'$ generate\footnote{Elements of these
spaces are elements of the associative algebra $Cl_N$.} a subalgebra $C(X,Y)$ of $\Cl_N$ isomorphic to
  $\Cl_{\dim(W)}$, which is the super-commutant of the subalgebra generated by
  $X$ and $Y$.
\end{enumerate}
\end{lemma}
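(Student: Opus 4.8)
The whole lemma will follow from one scalar computation. Write $\langle -,-\rangle\colon U\otimes U'\to\CC$ for the given pairing, and recall that inside $\Cl_N$ one has $\{\alpha,\beta\}=\langle\alpha,\beta\rangle\cdot 1$ for $\alpha\in U$, $\beta\in U'$, while $U$ and $U'$ are isotropic (any two elements of $U$, or any two of $U'$, anticommute). The first step is to evaluate $\langle\Phi(X),\Phi(Y)\rangle$. Writing $\Phi(X)=\sum_i c_i a_i$ and $\Phi(Y)=\sum_i d_i b_i$, one has $d_i=(z_i^2-z_i^{-2})c_i$ with $c_i^2=(z_1\cdots z_{i-1})^{-2}(z_{i+1}\cdots z_N)^2$, so the summand $c_id_i=c_i^2(z_i^2-z_i^{-2})$ telescopes: setting $h_i=(z_1\cdots z_{i-1})^{-2}(z_i\cdots z_N)^2$ one gets $c_id_i=h_i-h_{i+1}$, whence $\langle\Phi(X),\Phi(Y)\rangle=h_1-h_{N+1}=z^2-z^{-2}$. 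Under the hypothesis $z^2-z^{-2}\neq 0$ this pairing is nonzero; equivalently $\{\Phi(X),\Phi(Y)\}=(z^2-z^{-2})\cdot 1\neq 0$.

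Part (1) is then immediate. Since the pairing $U\times U'$ is perfect and $\Phi(Y)\neq 0$, the subspace $W=(\CC\Phi(Y))^\perp$ has codimension one in $U$; and because $\langle\Phi(X),\Phi(Y)\rangle\neq 0$ we have $\Phi(X)\notin W$, so $U=\CC\Phi(X)\oplus W$. The decomposition $U'=\CC\Phi(Y)\oplus W'$ follows by the symmetric argument. In particular $\dim W=\dim W'=N-1$.

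For part (2) the plan is to realize $C(X,Y)$ as a Clifford factor. First I would check that
\[
U\oplus U'=(\CC\Phi(X)\oplus\CC\Phi(Y))\ \perp\ (W\oplus W')
\]
is an orthogonal decomposition for the Clifford form: the cross anticommutators $\{\Phi(X),w\}$ and $\{\Phi(Y),w'\}$ vanish by isotropy of $U$ and $U'$, while $\{\Phi(X),w'\}$ and $\{\Phi(Y),w\}$ vanish by the very definitions $W'=(\CC\Phi(X))^\perp$, $W=(\CC\Phi(Y))^\perp$. Each summand carries a nondegenerate form: the two-dimensional piece because $\{\Phi(X),\Phi(Y)\}\neq 0$, and $W\oplus W'$ because the restricted pairing $W\times W'\to\CC$ is perfect. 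This last point is the only non-formal check: if $w\in W$ pairs trivially with all of $W'$, then since $w\in W$ it also pairs trivially with $\Phi(Y)$, hence with all of $U'=\CC\Phi(Y)\oplus W'$, forcing $w=0$ by nondegeneracy of the full pairing. An orthogonal decomposition of the form space yields a graded tensor factorization $\Cl_N\cong\Cl_1\,\hat{\otimes}\,\Cl(W\oplus W')$, and a choice of dual bases of $W$ and $W'$ identifies $\Cl(W\oplus W')=C(X,Y)\cong\Cl_{N-1}=\Cl_{\dim W}$.

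It remains to identify $C(X,Y)$ with the super-commutant of $A:=\langle\Phi(X),\Phi(Y)\rangle$. Here $A$ is exactly the first tensor factor $\Cl_1\cong M_{1|1}(\CC)$, a graded central simple superalgebra, so the super double-centralizer theorem applied to $\Cl_N\cong A\,\hat{\otimes}\,C(X,Y)$ gives that the super-commutant of $A$ is the complementary factor $C(X,Y)$; alternatively one checks directly that an element supercommuting with $\Phi(X)$ and $\Phi(Y)$ must have scalar $\Cl_1$-component and hence lie in $1\,\hat{\otimes}\,C(X,Y)$. The point to watch throughout is that \emph{commutant} must be taken in the graded (super) sense: the ordinary ungraded centralizer of $A$ in $\Cl_N$ is different, so the Koszul signs in the tensor factorization cannot be dropped. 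Once the orthogonality of the decomposition above is in hand, everything else is the standard structure theory of Clifford algebras over $\CC$, and the only genuinely computational input is the telescoping evaluation of $\langle\Phi(X),\Phi(Y)\rangle$.
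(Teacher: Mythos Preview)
Your argument is correct, and in several places cleaner than the paper's. The paper proves part~(1) by exhibiting explicit coefficients $\beta_i=\frac{z^2(1-z_i^{-4})}{z^2-z^{-2}}$ with $a_i-\beta_i\Phi(X)\in W$ and then checking the sum is direct; you instead observe that $\langle\Phi(X),\Phi(Y)\rangle=z^2-z^{-2}\neq 0$ (a fact which, incidentally, also follows at once from applying $\Phi$ to the relation $\{X,Y\}=t^2-t^{-2}$) and use nondegeneracy of the pairing, which is more conceptual and avoids the formula entirely. For part~(2) the paper argues via the module $U_N$: the subalgebra generated by $\Phi(X),\Phi(Y)$ acts semisimply by $2^{N-1}$ copies of its two-dimensional simple, so its super-commutant has dimension $2^{2(N-1)}$, forcing it to coincide with $C(X,Y)$. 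Your route stays entirely inside the Clifford algebra: the orthogonal splitting $(\CC\Phi(X)\oplus\CC\Phi(Y))\perp(W\oplus W')$ yields a graded tensor factorization $\Cl_N\cong\Cl_1\,\hat{\otimes}\,\Cl_{N-1}$, from which both the isomorphism $C(X,Y)\cong\Cl_{N-1}$ and the super-commutant statement follow by standard structure theory. What your approach buys is independence from the representation $U_N$ and a direct verification that the restricted pairing on $W\times W'$ is perfect (a point the paper glosses over with ``obviously''); what the paper's approach buys is brevity and a concrete dimension count. Your emphasis that the commutant must be read in the graded sense is well placed.
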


\begin{proof}
  The inclusion $U\subseteq\CC \Phi(X)\oplus W$ holds by
  definition. For the inverse it is enough to find (for $1\leq i\leq
  N$) $\beta_i\in\CC$ such that $a_i-\beta_i\Phi(X)\in W$. One easily
  verifies that $\beta_i=\frac{z^2(1-z_i^{-4})}{z^2-z^{-2}}$ does the
  job. The sum is direct, since an element $u$ in the intersection is
  of the form $u=\alpha\sum_{i=1}^N\gamma_i^2a_i$ where $0=
  \alpha\sum_{i=1}^N\gamma_i^2(z^2-z^{-2})=0$, hence with our
  assumption $\alpha=0$ and so $u=0$. The argument for $U'$ is similar. Part 1 follows.

  Now $C(X,Y)$ is clearly
  contained in the commutant of $X$ and $Y$.
  Since $\dim \langle X, Y\rangle=4$, and the action of this
  subalgebra on $U_N$ is semi-simple, by $2^{N-1}$ copies of the
  unique $2$-dimensional irreducible representation of $\langle X, Y\rangle$.  Thus,
  its commutant is of dimension $2^{2(N-1)}$.  Since $C(X,Y)$ has this
  dimension, it must be the entire commutant,  obviously isomorphic to the Clifford algebra as claimed.
\end{proof}

In order to find the super-commutant not just of $\Phi(X)$ and $\Phi(Y)$, but all of $\gl$, we must find the subalgebra which also commutes with $\Phi(G)$.

\begin{proposition}[Schur-Weyl duality]
  Let still $z^2-z^{-2}\not=0$. The subalgebra of $\Cl_W$ commuting with $\Phi(G)$ is that of Euler degree
  0, i.e. that generated by elements of the form $W\cdot W'$.  There
  is a natural map $U(\fgl(W))\to \Cl_W\subset \Cl_N$ whose image is
  this subalgebra.
\end{proposition}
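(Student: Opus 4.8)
The plan is to translate ``commuting with $\Phi(G)$'' into a statement about a single grading operator, and then to recognize the resulting degree-zero subalgebra as the image of $U(\fgl(W))$ under the standard fermionic construction. First I would replace $\Phi(G)$ by its number operator. Since $\Phi(G)=\sum_{i=1}^N(n_i+a_ib_i)=\lambda+\mathcal N$ with $\lambda=\sum_i n_i\in\CC$ a scalar and $\mathcal N=\sum_{i=1}^N a_ib_i$, commuting with $\Phi(G)$ is the same as commuting with $\mathcal N$. A direct Clifford computation gives $[\mathcal N,a_j]=a_j$ and $[\mathcal N,b_j]=-b_j$, and because $\mathcal N$ is even, $\operatorname{ad}_{\mathcal N}$ is an ordinary derivation of $\Cl_N$. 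Hence $\operatorname{ad}_{\mathcal N}$ acts on a monomial built from $p$ factors in $U$ and $q$ factors in $U'$ as multiplication by $p-q$; that is, $\mathcal N$ generates the Euler grading, in which $W\subseteq U$ sits in degree $+1$ and $W'\subseteq U'$ in degree $-1$. Restricting this grading to the graded subalgebra $\Cl_W=C(X,Y)$ generated by $W$ and $W'$, an element commutes with $\Phi(G)$ if and only if it is homogeneous of degree $0$.

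Next I would identify the degree-zero part of $\Cl_W$ with the subalgebra generated by $W\cdot W'$. One inclusion is immediate, since each product $ww'$ with $w\in W$, $w'\in W'$ is homogeneous of degree $0$. For the reverse inclusion I would argue by induction on the number of factors: using $b_ja_k=\langle a_k,b_j\rangle-a_kb_j$ to move each annihilator past a single creator, any degree-zero monomial rewrites as a linear combination of shorter monomials and products of elements $ww'$. This proves the first assertion.

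For the second assertion, the key input is that the hypothesis $z^2-z^{-2}\neq 0$ makes the Clifford pairing restrict nondegenerately to $W\otimes W'$. Indeed $\{u,u'\}=\langle u,u'\rangle$ for $u\in U$, $u'\in U'$, so $\langle\Phi(X),\Phi(Y)\rangle=\{\Phi(X),\Phi(Y)\}=\Phi(\{X,Y\})=z^2-z^{-2}$; combining this with $W=(\CC\Phi(Y))^\perp$, $W'=(\CC\Phi(X))^\perp$ and the decompositions $U=\CC\Phi(X)\oplus W$, $U'=\CC\Phi(Y)\oplus W'$ of the previous lemma shows that $W\otimes W'\to\CC$ is a perfect pairing. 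Thus $W'\cong W^*$ and $\fgl(W)=\operatorname{End}(W)\cong W\otimes W'$, and I would define $U(\fgl(W))\to\Cl_W$ as the algebra map induced by $w\otimes w'\mapsto ww'$. Choosing dual bases $\{w_\alpha\}$ of $W$ and $\{w'_\beta\}$ of $W'$, so that $\{w_\alpha,w'_\beta\}=\delta_{\alpha\beta}$ and $\{w_\alpha,w_\beta\}=\{w'_\alpha,w'_\beta\}=0$, a short computation gives $[w_\alpha w'_\beta,w_\gamma w'_\delta]=\delta_{\beta\gamma}\,w_\alpha w'_\delta-\delta_{\delta\alpha}\,w_\gamma w'_\beta$, which are exactly the relations of $\fgl(W)$ under $E_{\alpha\beta}\mapsto w_\alpha w'_\beta$; hence the map is a well-defined homomorphism whose image is the subalgebra generated by $W\cdot W'$, namely the degree-zero part found above.

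The main obstacle I expect is bookkeeping rather than depth: one must keep the two grading pictures aligned, checking that the Euler grading generated by the global operator $\mathcal N\in\Cl_N$ genuinely restricts to the intrinsic creation/annihilation grading of the smaller algebra $\Cl_W$, whose generators $W,W'$ are not the original $U,U'$. One must also pin down precisely where the hypothesis $z^2-z^{-2}\neq0$ enters, namely in the nondegeneracy of the restricted pairing, which is exactly what prevents $W\otimes W'$ from collapsing to a proper quotient of $\fgl(W)$ and thereby guarantees that the homomorphism $U(\fgl(W))\to\Cl_W$ surjects onto the full commutant.
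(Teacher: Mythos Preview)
Your proof is correct and follows essentially the same route as the paper's. The paper dismisses the first assertion as ``obvious'' where you spell out the number-operator argument $\Phi(G)=\lambda+\mathcal N$ and the induced Euler grading; and where the paper invokes the PBW theorem for Clifford algebras to identify the degree-zero subspace with $\bigoplus_n (W\cdot W')^n$, you give the equivalent inductive normal-ordering argument. The construction of the map $U(\fgl(W))\to\Cl_W$ via $E_{\alpha\beta}\mapsto w_\alpha w'_\beta$ and the commutator check are identical in both. Your discussion of where the hypothesis $z^2-z^{-2}\neq 0$ enters (nondegeneracy of the restricted pairing) is really a restatement of the content of the preceding lemma, which already establishes that $W,W'$ generate a genuine Clifford algebra $\Cl_{\dim W}$; once that lemma is granted, the hypothesis plays no further independent role in this proposition.
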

\begin{proof}
  The first statement is obvious. Recall that $W$ and $W'$ generate a Clifford algebra, say with generators $a_i'$, $b_i'$. Note that $W\cdot W'$ forms a Lie subalgebra of $\Cl_W$ isomorphic to
  $\fgl(W)$ (by mapping $a_i'b_j'$ to the matrix unit $E_{i,j}$),
  hence this extends to an algebra map $U(\fgl(W))\to \Cl_W\subset \Cl_N$
  The image of this map is precisely the commutant, because by the PBW theorem for Clifford algebras, the subspace of Euler
  degree 0 is that of the form $\bigoplus_nW^n\cdot
  (W')^n=\bigoplus_n(W\cdot W')^n$.
\end{proof}

Under the action of $\Cl_W$, $U_N$ decomposes into two copies of
$U_W=\wedgedot W$, one with parity reversed. Thus, $V(\Bz,\Bn)$ is
completely decomposable and, up to grading shift and parity-reversal,
the summands are precisely the 2-dimensional simple modules from above.  Of course, the highest weight
vector $v_N$ generates a copy of $V_{z,\lambda}$, so all simple submodules
must be of the form $V_{z,\lambda+k}$ for some $k$ (possibly with parity
reversed). Thus,
\begin{proposition}[Tensor space decomposition]\label{intertw}\hfill\\
  The multiplicity space of $V_{z,\lambda+k}$ in $V(\Bn,\Bz)$ is the space of
  weight $k$ (for $G$) in $U_W$.  That is
\begin{equation}\label{decomp}
V(\Bz,\Bn)\simeq  \bigoplus_{k=0}^{N-1} \sideset{}{^k}\bigwedge W \otimes \Pi^kV_{z, \lambda+k}
\end{equation}
where $\Pi$ is the shift of parity, $\Pi^2=id$.
\end{proposition}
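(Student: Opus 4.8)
The plan is to decompose $U_N\cong V(\Bz,\Bn)$ as a module for the two commuting actions supplied by the previous results, and to read off the summands from an explicit Clifford-algebra factorization rather than from an abstract double-commutant theorem. First I would record that $\gl$ acts via $\Phi$ and $U(\fgl(W))$ acts via the Schur--Weyl map, and that these two actions commute: every element of $W\cdot W'$ is even and lies in the super-commutant $C(X,Y)$, so it commutes with $\Phi(X)$ and $\Phi(Y)$, while the Euler-degree-$0$ characterization in the Schur--Weyl Proposition makes it commute with $\Phi(G)$; the element $t$ acts as the scalar $z$. Thus $U_N$ is a module over $\gl\otimes U(\fgl(W))$ and the task is to decompose it as such.

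Next I would make the factorization explicit. By part (1) of the Lemma the pairing of $U$ with $U'$ is block-diagonal, pairing $\Phi(X)$ with $\Phi(Y)$ and $W$ with $W'$, so $\Cl_N\cong \Cl_1\mathbin{\widehat\otimes}\Cl_W$ as superalgebras, where $\Cl_1=\langle\Phi(X),\Phi(Y)\rangle$ (after rescaling $\Phi(Y)$ by $(z^2-z^{-2})^{-1}$ to normalize the anticommutator) and $\Cl_W=C(X,Y)$. Its unique irreducible correspondingly factors as $U_N\cong U_1\mathbin{\widehat\otimes}U_W$ with $U_1$ the $2$-dimensional irreducible of $\Cl_1$ and $U_W=\wedgedot W$; on the $U_1$ factor $\Phi(X),\Phi(Y)$ act as the standard Clifford operators, reproducing the $X,Y$-action of the module \eqref{2-irrep}.

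The heart of the argument is the bookkeeping for $G$. I would write $\Phi(G)=\lambda+\mathcal N$, where $\mathcal N=\sum_i a_ib_i$ is the fermion-number operator, and check that under the polarization change $\{a_i,b_i\}\rightsquigarrow\{\Phi(X),\Phi(Y);a_j',b_j'\}$ it splits as $\mathcal N=\mathcal N_{XY}+\mathcal N_W$, with $\mathcal N_{XY}\in\{0,1\}$ recording occupation in the $\Phi(X)$-direction and $\mathcal N_W=\sum_j a_j'b_j'$ acting on $\bigwedge^k W$ by $k$. Then on $\bigwedge^k W\otimes U_1$ the operator $G$ takes the value $\lambda+k$ on the even vector of $U_1$ and $\lambda+k+1$ on the odd vector, which is exactly the $G$-action of $V_{z,\lambda+k}$ in \eqref{2-irrep}. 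Hence the summand indexed by the weight-$k$ (Euler-degree-$k$) subspace $\bigwedge^k W$ of $U_W$ is, as a $\gl$-module, $\bigwedge^k W\otimes V_{z,\lambda+k}$, with $\bigwedge^k W$ playing the role of multiplicity space.

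The step I expect to be the main obstacle is the $\ZZt$-grading rather than the weight computation. A basis vector of $\bigwedge^k W$ is a product of $k$ odd Clifford generators and hence carries parity $(-1)^k$, so matching the \emph{even} lowest-weight vector $v$ of $V_{z,\lambda+k}$ forces the shift $\Pi^k$; pinning down this sign is where the convention in $\Phi$ flagged in the acknowledgements must be handled with care. The splitting $\mathcal N=\mathcal N_{XY}+\mathcal N_W$, by contrast, is clean: both operators annihilate the cyclic vector $v$ and have identical commutators with each $a_i$, so they coincide with no correction term. Summing over the Euler degrees $k=0,\dots,N-1$ available in $\wedgedot W$ (recall $\dim W=N-1$) then assembles the isomorphism \eqref{decomp}.
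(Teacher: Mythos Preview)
Your proposal is correct and follows essentially the same route as the paper. The paper's argument (contained in the paragraph immediately preceding the proposition) is the terse version of yours: it notes that under $\Cl_W$ the module $U_N$ splits as two copies of $U_W=\wedgedot W$, one parity-reversed---which is exactly your $U_1\mathbin{\widehat\otimes}U_W$---and then pins down the $G$-weights by observing that $v_N$ generates $V_{z,\lambda}$, so the remaining summands are forced to be $V_{z,\lambda+k}$. Your explicit number-operator decomposition $\mathcal N=\mathcal N_{XY}+\mathcal N_W$ and the block-diagonality of the pairing are the details the paper suppresses; they are correct as stated, and your handling of the $\Pi^k$ parity shift matches the paper's ``one with parity reversed'' observation.
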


\subsection{} This decomposition of the tensor product can be made
more explicit if we chose a basis $c_i, i=1,\ldots, N-1$  in the subspace $W\subset U$ complementary to $\CC\Phi(X)$, hence fixing a decomposition $U=\CC\Phi(X)\oplus_{i=1}^{N-1}\CC c_i$. From now on we will just write $X$, $Y$ instead of $\Phi(X)$, $\Phi(Y)$.

\begin{lemma}
We have the following formulas
\begin{eqnarray*}
Xc_{i_1}\dots c_{i_k}w&=&(-1)^kc_{i_1}\dots c_{i_k}X w\\
Yc_{i_1}\dots c_{i_k}w&=&(-1)^kc_{i_1}\dots c_{i_k}Y w +\sum_{j=1}^k y_{i_j}(-1)^{j-1}c_{i_1}\dots \widehat{c_{i_j}}\dots c_{i_k}w
\end{eqnarray*}
where $w\in U$ and the $y_i$'s are defined by $Yc_i+c_i Y=y_i$.
\end{lemma}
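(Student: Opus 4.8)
The plan is to reduce both identities to the two basic (anti)commutation relations between $X=\Phi(X)$, $Y=\Phi(Y)$ and the factors $c_i$ inside $\Cl_N$, and then to propagate these relations through the product $c_{i_1}\cdots c_{i_k}$ by induction on $k$; the only delicate point is the bookkeeping of signs.

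First I would record the two relations I need. Both $X$ and each $c_i$ lie in $U=\Span\langle a_1,\dots,a_N\rangle$, and since $\{a_l,a_m\}=0$ any two elements of $U$ anticommute in $\Cl_N$; hence $Xc_i=-c_iX$. On the other hand $Y$ lies in $U'=\Span\langle b_1,\dots,b_N\rangle$, so the relation $\{a_l,b_m\}=\delta_{lm}$ forces $\{Y,c_i\}$ to be a scalar, and this scalar is exactly the element $y_i\in\CC$ of the statement. Thus the two facts driving the computation are $Xc_i=-c_iX$ and $Yc_i=-c_iY+y_i$, with $y_i$ central.

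The first formula is then immediate: carrying $X$ rightward past each of the $k$ factors $c_{i_j}$ costs one sign apiece, so $Xc_{i_1}\cdots c_{i_k}=(-1)^kc_{i_1}\cdots c_{i_k}X$, and applying this element to $w$ yields the claim. (Here $X$ is never moved past $w$ itself, so no hypothesis on $w$ enters.) For the second formula I would induct on $k$, with $k=0,1$ being the defining relation $Yc_i=-c_iY+y_i$. In the inductive step I split $Yc_{i_1}\cdots c_{i_k}=-c_{i_1}\bigl(Yc_{i_2}\cdots c_{i_k}\bigr)+y_{i_1}c_{i_2}\cdots c_{i_k}$, apply the hypothesis to the $(k-1)$-fold product, and pull each central scalar $y_{i_j}$ back through the leading $c_{i_1}$. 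The term in which $Y$ has passed all the way to the right becomes $(-1)^kc_{i_1}\cdots c_{i_k}Y$; the correction terms of the hypothesis produce the summands for $j=2,\dots,k$ once the extra sign from $-c_{i_1}$ is absorbed; and the separate $y_{i_1}$-term supplies exactly the missing $j=1$ summand. Right-multiplication by $w$ then completes the identity.

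The only step that requires genuine care is this sign accounting. When a deleted-factor product of length $k-1$ is reindexed as a length-$k$ product with one factor removed, the position of $c_{i_j}$ shifts by one, so the inductive sign $(-1)^{j-2}$ is promoted to the claimed $(-1)^{j-1}$. Everything else follows formally from the two (anti)commutation identities and the centrality of the $y_i$, so I expect no obstacle beyond this bookkeeping.
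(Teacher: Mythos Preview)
Your argument is correct and is precisely the computation the paper has in mind; the paper's own proof consists of the single word ``Obvious.'' Your write-up simply unpacks that word by recording the two basic relations $Xc_i=-c_iX$ and $Yc_i=-c_iY+y_i$ in $\Cl_N$ and then inducting on $k$, which is the natural (and essentially the only) route.
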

\begin{proof}
Obvious.
\end{proof}

For a vector $v\in U$ define
\[
(v)_{i_1,\dots, i_k}=c_{i_1}\dots c_{i_k}v+
(-1)^k\frac{1}{z-z^{-1}}\sum_{a=1}^k y_{i_a}(-1)^{a-1} c_{i_1}\dots \widehat{c_{i_a}}\dots c_{i_k}v
\]

\begin{proposition}The space
\[
V_{i_1,\dots, i_k}=\CC (v_N)_{i_1,\dots, i_k}\oplus \CC X(v_N)_{i_1,\dots, i_k}
\]
where $v_N$ is the highest weight vector (see section 2.2), is an irreducible submodule isomorphic to $V_{z, (\sum_{i=1}^Nn_i)+k}$. This submodule corresponds to the monomial
$c_{i_1}\wedge\dots\wedge c_{i_k}$ in the decomposition
(\ref{decomp}).
\end{proposition}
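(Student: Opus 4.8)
The plan is to show that $w:=(v_N)_{i_1,\dots,i_k}$ is a lowest weight vector of weight $\lambda+k$, where $\lambda=\sum_i n_i$, and that $u:=Xw$ together with $w$ spans a two-dimensional space closed under $\gl$; once $w$ is seen to satisfy $Yw=0$, $Gw=(\lambda+k)w$ and $tw=zw$, the description of the two-dimensional irreducibles in \eqref{2-irrep} forces the cyclic module generated by $w$ to be isomorphic to $V_{z,\lambda+k}$ (up to parity). So the bulk of the work is the explicit verification of these three identities, a nonvanishing check, and finally the matching with the correct summand of \eqref{decomp}.

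The key simplification I would exploit is that the correction terms in the definition of $(v_N)_{i_1,\dots,i_k}$ disappear for this choice of the $c_i$. Indeed, since $c_i\in W=(\CC\Phi(Y))^\perp$ and the pairing $U\otimes U'\to\CC$ agrees with the Clifford anticommutator, each scalar $y_i=\{Y,c_i\}=\langle c_i,\Phi(Y)\rangle$ vanishes; hence $w=c_{i_1}\cdots c_{i_k}v_N$. The preceding Lemma then gives $Yw=(-1)^k c_{i_1}\cdots c_{i_k}\,Yv_N=0$ because $Yv_N=0$, while $[G,c_i]=c_i$ for every $c_i\in U$ together with $Gv_N=\lambda v_N$ yields $Gw=(\lambda+k)w$; finally $t$ acts on all of $V(\Bz,\Bn)$ as the scalar $z=z_1\cdots z_N$, so $tw=zw$.

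Next I would pin down the module structure on $\CC w\oplus\CC u$. Since $\Phi$ is an algebra map and $X^2=0$ in $\gl$, one has $Xu=X^2w=0$; and using $\{X,Y\}=t^2-t^{-2}$ together with $Yw=0$ gives $Yu=YXw=(t^2-t^{-2})w=(z^2-z^{-2})w$. Together with the weight computation these are exactly the structure constants of $V_{z,\lambda+k}$ in \eqref{2-irrep}, the parity shift $\Pi^k$ accounting for the odd parity of the $k$ factors $c_{i_j}$. To see that the space is genuinely two-dimensional, so that the map is an isomorphism and $V_{i_1,\dots,i_k}$ is irreducible, I would note that $w$ corresponds under \eqref{UN} to $c_{i_1}\wedge\dots\wedge c_{i_k}\neq0$ in $\wedgedot W$, that $u=\Phi(X)\wedge c_{i_1}\wedge\dots\wedge c_{i_k}\neq0$ because $\Phi(X)\notin W$, and that $w$ and $u$ lie in distinct $G$-weight spaces and are therefore independent; irreducibility is then immediate since the modules of \eqref{2-irrep} are simple.

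It remains to match $V_{i_1,\dots,i_k}$ with the correct summand of \eqref{decomp}, and this identification is the step I expect to require the most care, since it asks one to realize the abstract multiplicity space concretely. By Proposition \ref{intertw} the multiplicity space of $V_{z,\lambda+k}$ is $\sideset{}{^k}\bigwedge W$, with basis the monomials $c_{i_1}\wedge\dots\wedge c_{i_k}$; under the identification \eqref{UN} of $U_N$ with $\wedgedot U$ the lowest weight vectors of the copies of $V_{z,\lambda+k}$ inside $V(\Bz,\Bn)$ are precisely these monomials. Since $w=c_{i_1}\cdots c_{i_k}v_N$ is exactly the image of $c_{i_1}\wedge\dots\wedge c_{i_k}$, the submodule $V_{i_1,\dots,i_k}$ is the summand indexed by that monomial, completing the proof.
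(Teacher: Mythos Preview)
Your proof is correct and follows the same approach as the paper's: verify $Y(v_N)_{i_1,\dots,i_k}=0$ together with the $G$- and $t$-eigenvalues, then invoke \eqref{2-irrep}. Your additional observation that $y_i=\{Y,c_i\}=\langle c_i,\Phi(Y)\rangle=0$ (since $c_i\in W=(\CC\Phi(Y))^\perp$ and the Clifford anticommutator on $U\times U'$ computes the pairing) is a pleasant simplification the paper does not state---it collapses $(v_N)_{i_1,\dots,i_k}$ to $c_{i_1}\cdots c_{i_k}v_N$ and streamlines every subsequent verification.
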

\begin{proof} We have
\[
X(v_N)_{i_1,\dots, i_k}=(Xv_N)_{i_1,\dots, i_k}
\]
and since $Yv_N=0$, we have
\[
Y(v_N)_{i_1,\dots, i_k}=0, \ \ YX(v_N)_{i_1,\dots, i_k}=(z^2-z{-2})(v_N)_{i_1,\dots, i_k}
\]
The statement follows directly from the action of $t$ and $G$ and \eqref{2-irrep}.
\end{proof}

\section{The relation to the Burau representation}
\subsection{}The action of the universal $R$-matrix \eqref{uni-R} in the tensor product representation
$V_{z_1,n_1}\otimes V_{z_2,n_2}$ can easily be computed explicitly.
Namely, in terms of the weight basis (by abuse of language we use the basis $\{v,Xv\}$ for either module) this {\it right} action looks as follows:
\begin{align*}
  R(v\otimes v)&=z_1^{2n_2}z_2^{2n_1}v\otimes v \\
  R(v\otimes Xv)&=z_1^{2n_2+2}z_2^{2n_1}v\otimes
  Xv-z_2^{-1}z_1(z_2^2-z_2^{-2})z_1^{2n_2+2}z_2^{2n_1}Xv\otimes v\\
  R(Xv\otimes v)&=z_1^{2n_2}z_2^{2n_1+2}Xv\otimes v\\
  R(Xv\otimes Xv)&=z_1^{2n_2+2}z_2^{2n_1+2}Xv\otimes Xv
\end{align*}
In the tensor product basis $v\otimes v, v\otimes Xv, Xv\otimes v, Xv\otimes Xv$ it produces the $4\times 4$ matrix $R^{(z_1,z_2)}=z_1^{-2n_2}z_2^{-2n_1}(R)$,
\begin{equation}
R^{(z_1,z_2)}=\left[\begin{array}{cccc} 1 & 0 & 0 & 0 \\
0 & z_1^2 & -(z_2^2-z_2^{-2})z_1^3z_2^{-1}  & 0 \\
0 & 0& z_2^2 & 0 \\
0 & 0 & 0 & z_1^2z_2^2 \end{array}\right]
\end{equation}
with
\begin{equation}
{R^{(z_1,z_2)}}^{-1}=
\left[\begin{array}{cccc} 1 & 0 & 0 & 0 \\
0 &z_1^{-2} & (z_2^2-z_2^{-2})z_2^{-3}z_1  & 0 \\
0 & 0&  {z_2^{-2}} & 0 \\
0 & 0 & 0 & z_1^{-2}z_2^{-2} \end{array}\right]
\end{equation}

\subsection{}
Consider the groupoid of braids whose strands are labeled by elements in $\mathbb{C}\times\mathbb{C}^*$. Each $N$-braid with colors $(z_i,n_i)$, $1\leq i\leq N$ on its $N$ strands defines a morphism from the tuple $(\Bz,\Bn)$ to the permuted tuple $(\sigma\Bz,\sigma\Bn)$ given by the braid. Assigning to a tuple $(\Bz,\Bn)$ the representation $
V(\Bz,\Bn)=V_{z_{1},n_{1}}\otimes \dots \otimes V_{z_{N},n_{N}}
$
and to the single (positive) braid $\beta_i$ with strands colored by $a:=(z_i,n_i)$ and $b:=(z_{i+1},n_{i+1})$ the mapping
\[
\pi(\beta_i)(a,b):\quad V(\Bz,\Bn)\to V(s_i\Bz,s_i\Bn)
\]
\begin{equation}\label{pi}
\pi(\beta_i)(a,b)=-z_1^{-3}z_{i+1}^{-1}P_{i,i+1}\circ \Big(1\otimes \cdots\otimes 1\otimes R^{(z_1,z_2)}\otimes 1\otimes \cdots \otimes 1\Big)
\end{equation}
defines a representation $\pi$ of the colored braid groupoid. Here $R^{(z_1,z_2)}$  is as above, hence up to a multiple, the universal
$R$-matrix \eqref{uni-R} acting on $V_{n_{\sigma_1}, z_{\sigma_1}}\otimes V_{n_{\sigma_2}, z_{\sigma_2}}$, and $P_{i,i+1}$ is the flip map of simply swapping the two tensor factors as $x\otimes y\mapsto (-1)^{\overline{x}\overline{y}}y\otimes x$. To verify the claim note that the braid relations amount to the relations
\small
\begin{eqnarray*}
\pi(\beta_i)(a,b)\circ\pi(\beta_{i+1})(a,c)\circ\pi(\beta_i)(b,c)&=&\pi(\beta_{i+1})(b,c)\circ\pi(\beta_i)(a,c)\circ\pi(\beta_{i+1})(a,b),
\\
\pi(\beta_i)(a,b)\circ\pi(\beta_j)(c,d)&=&\pi(\beta_j)(c,d)\circ\pi(\beta_i)(a,b).
\end{eqnarray*}
\normalsize
for $j\not={i-1,i,i+1}$ and $a,b,c,d$ arbitrary colors. These relations can easily be checked by direct calculations.
In particular, the subgroup $\Br_\Bz$ of the braid group that preserves $(\Bz,\Bn)$ acts on $V(\Bz,\Bn)$. Because the operators $\pi(\beta_i)$ commute with the action of $\gl$, the action is determined by the action on multiplicity spaces.

The first interesting multiplicity space is $W$ considered as a subspace of $U$:

\begin{proposition}
\label{Burau}
 In the case where $z_1=\dots =z_n$, this braid group representation on $U$ is
 isomorphic to the Burau representation.
 Similarly, the action on $W$ gives rise to the reduced Burau representation in case $z_1=\dots
 =z_n$.
\end{proposition}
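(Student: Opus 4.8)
The plan is to identify $U$ and $W$ with weight spaces inside the representation $V(\Bz,\Bn)$ and to compute the braid action there. Under the isomorphism $U_N=\wedgedot U$ of Section~2, the subspace $U=\bigwedge^1U$ is exactly the $G$-weight space of weight $\lambda+1$, and the generator $a_i$ corresponds to $w_i:=v^{\otimes(i-1)}\otimes Xv\otimes v^{\otimes(N-i)}$ (since $a_iv_N=w_i$). Because all colors are taken equal, each $\pi(\beta_i)$ sends $V(\Bz,\Bn)$ to itself and commutes with $\gl$; in particular it preserves $G$-weight spaces and therefore restricts to an operator on $U$, which is the representation we must analyze.

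First I would carry out the local computation. Away from the strands $i,i+1$ the operator $\pi(\beta_i)$ acts as the identity, so its restriction to $U$ is determined by the normalized operator $P\circ R^{(z,z)}$ on the two active tensor factors. Specializing $z_1=z_2=z$ and composing with the graded flip $P$, the only part acting nontrivially on the weight-$(\lambda+1)$ space is the block $\left(\begin{smallmatrix}1-z^4 & z^2\\ z^2 & 0\end{smallmatrix}\right)$ in the basis $(w_i,w_{i+1})$, the remaining strands being multiplied by a fixed scalar. A diagonal change of basis $w_i\mapsto z^{-2(i-1)}w_i$, the same for every generator and hence an isomorphism of braid representations, brings this block to $\left(\begin{smallmatrix}1-t & t\\ 1 & 0\end{smallmatrix}\right)$ with $t=z^4$; this is the unreduced Burau matrix, up to the overall normalization scalar, that is, up to a twist by an abelian character of the braid group.

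For the reduced statement I would invoke the isotypic decomposition of Proposition~\ref{intertw}. Inside the weight-$(\lambda+1)$ space the line $\CC\Phi(X)$ is the multiplicity space of $V_{z,\lambda}$ while $W$ is the multiplicity space of $V_{z,\lambda+1}$; as these are non-isomorphic simple $\gl$-modules and $\pi(\beta_i)$ is $\gl$-linear, the splitting $U=\CC\Phi(X)\oplus W$ is a decomposition into braid subrepresentations. Under the rescaling above, $\Phi(X)=\sum_i z^{N-2i+1}w_i$ becomes proportional to the all-ones vector $(1,\dots,1)$, i.e. precisely the standard Burau-invariant diagonal line; hence $\CC\Phi(X)$ is the character summand of the unreduced Burau representation and the complementary summand $W$ carries the reduced Burau representation with the same parameter $t=z^4$.

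The step I expect to be the main obstacle is the sign bookkeeping. The graded flip $P$ and the odd vector $Xv$ produce Koszul signs, which must be tracked carefully when promoting the two-strand computation to the global action on the $w_j$ and when comparing with the chosen convention for Burau; this is presumably where the sign correction acknowledged above enters. Keeping the normalization scalar explicit throughout is equally important, both to decide whether one lands on Burau exactly or only up to a character twist, and to pin down the relation $t=z^4$, with the reduced and unreduced parameters agreeing.
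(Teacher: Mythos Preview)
Your plan is exactly the paper's: take the weight-$(\lambda+1)$ basis $b_i=v^{\otimes(i-1)}\otimes Xv\otimes v^{\otimes(N-i)}$, compute the local $2\times2$ block of $\pi(\beta_i)$, and diagonally rescale to a standard Burau matrix; then identify the invariant line $\CC\,\Phi(X)$ to pass to the reduced representation. Two small discrepancies are worth noting. First, the paper keeps the scalar $-z_i^{-3}z_{i+1}^{-1}$ from \eqref{pi} throughout and uses the rescaling $A_{i+1}/A_i=-z_{i+1}z_i$, so it lands on Burau \emph{on the nose} with $t=z^{-4}$ rather than ``up to a character twist'' with $t=z^{4}$; your own final paragraph correctly anticipates that tracking this scalar is what pins the parameter down. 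Second, for the reduced part the paper just takes the quotient $U/\CC\,\Phi(X)$, whereas your isotypic-component argument shows $W$ is already a braid-stable direct summand; that is a slightly sharper (and correct) statement, but the conclusion is the same.
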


\begin{proof} Choose the basis $b_i=v\otimes v\otimes\cdots v\otimes Xv\otimes v\otimes\cdots\otimes v$, where $X$ is applied to the $i$-th factor. Then $\pi(\beta_i)$ defined in (\ref{pi}) acts on this basis as follows: $\pi(\beta_i)b_j=b_j$ for $j\not=i,i+1$, and on $b_i$ and $b_{i+1}$ it acts as the matrix
\begin{equation}
-z_i^{-3}z_{i+1}^{-1}\begin{pmatrix}
0&z_{i+1}^2\\
z_i^2 &  -(z_{i+1}^2-z_{i+1}^{-2})z_i^3z_{i+1}
\end{pmatrix}
\end{equation}
Change the basis as $b_i=A_ib'_i$ where $A_{i+1}/A_i=-z_{i+1}z_i$, then  $\pi(\beta_i)(b_j')=b_j'$ for $j\not=i,i+1$ and $\pi(\beta_i)$ acts on $b_i',b_{i+1}'$ by
\begin{equation}\label{mmag}
\begin{pmatrix}
0&z_i^{-4}\\
1 &  (1-z_{i+1}^{-4})
\end{pmatrix}
\end{equation}
In case $z_i=z_j$ for any $i,j$, we set we $t:=z_i^{-4}$ and obtain that $\pi(\beta_i)$ acts on $b_j$ an an idenity when $j\neq i, i+1$ and on $b_i$ and $b_{i+1}$ by the matrix:
\begin{equation*}
\begin{pmatrix}
0&1\\
t^{-1} &1-t^{-1}
\end{pmatrix}
\end{equation*}
But this is exactly the  Burau representation, see for example \cite[p.118 Example 3]{Birman}. The invariant subspace is $\CC Xv$. The reduced Burau representation acts in the quotient space $W=U/\CC Xv$.
\end{proof}


In general, we obtain a colored version 
of the Magnus representation of $\Br_\Bz$
obtained from an action on the free group on $N$ generators (see \cite[p.102 ff]{B} for the non-colored version and for colored version see \cite[Section 4]{Con}) and thus, Gassner representation of the pure braid group. In other words we proved the following.

\begin{theorem}
\label{rem:Magnus}
The mapping $\beta_i\mapsto \pi(\beta_i)(a_i,a_{i+1})$ gives the Magnus-Gassner representation of the pure 
braid group.
\end{theorem}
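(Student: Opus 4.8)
The plan is to read the Gassner matrices off the explicit formula \eqref{mmag} and match them, strand by strand, with the Fox--Jacobian description of the Magnus representation. First I would recall the classical construction alluded to in the paragraph preceding the statement: the braid group acts on the free group $F_N=\langle x_1,\dots,x_N\rangle$ by the Artin automorphisms $\beta_i\colon x_i\mapsto x_ix_{i+1}x_i^{-1}$, $x_{i+1}\mapsto x_i$, fixing the remaining generators, and the Magnus representation assigns to a braid $\beta$ the matrix of Fox derivatives $\big(\partial\beta(x_j)/\partial x_k\big)$ pushed forward to the group ring $\CC[t_1^{\pm1},\dots,t_N^{\pm1}]$ of the abelianization $x_k\mapsto t_k$. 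Its restriction to the pure braid group is the Gassner representation, while setting all $t_k$ equal recovers the unreduced Burau representation of Proposition~\ref{Burau}. A short Fox-calculus computation gives that $\beta_i$ acts as the identity outside the $(i,i+1)$-block, and on that block by
\[
\begin{pmatrix} 1-t_{i+1} & t_i \\ 1 & 0 \end{pmatrix}.
\]

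Next I would compare this block with \eqref{mmag}. After the substitution $t_k=z_k^{-4}$ already used in the proof of Proposition~\ref{Burau}, the matrix \eqref{mmag} reads $\left(\begin{smallmatrix}0 & t_i\\ 1 & 1-t_{i+1}\end{smallmatrix}\right)$, which is carried to the displayed Gassner block by the index-reversing swap $P_{i,i+1}$ together with the diagonal rescaling $b_i=A_ib_i'$, $A_{i+1}/A_i=-z_{i+1}z_i$. This is precisely the same rescaling that converted $\pi(\beta_i)$ into the normalized Burau generator in the uniform case, now read with the colors $z_i$ kept distinct. Thus I expect the two generator matrices to coincide after fixing this one diagonal change of basis.

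Since the braid relations for $\pi$ were already verified when the representation was defined in \eqref{pi}, and the Magnus--Gassner matrices are well known to satisfy the same relations, the two representations agree as soon as they agree on the elementary colored crossings $\beta_i$. Restricting to the color-preserving subgroup $\Br_\Bz$, which for generic (pairwise distinct) colors is exactly the pure braid group, then yields the Gassner representation, while allowing the colors to permute recovers the full Magnus representation at the level of the braid groupoid. Because the $z_i$ are generic the variables $t_i=z_i^{-4}$ are algebraically independent, so no specialization of the Gassner parameters is lost.

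The main obstacle is bookkeeping the normalizations. The scalar prefactor $-z_i^{-3}z_{i+1}^{-1}$ in \eqref{pi} and the diagonal change of basis $A_{i+1}/A_i=-z_{i+1}z_i$ must be tracked exactly, since the Magnus representation is only canonically defined up to conjugation by a diagonal matrix and multiplication by a monomial in the $t_k$; one has to pin down this residual one-dimensional ambiguity to obtain an honest equality of representations rather than mere projective equivalence. A secondary subtlety is the choice of Fox-derivative convention (left versus right) and of the Artin action, which can replace the Gassner matrix by its transpose-inverse; I would fix these conventions at the outset so that the block in \eqref{mmag} matches the displayed Gassner block on the nose.
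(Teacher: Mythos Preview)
Your approach is correct and is essentially the one the paper takes: the paper gives no separate proof of the theorem at all, but simply declares that the matrix \eqref{mmag} computed in the proof of Proposition~\ref{Burau} \emph{is} the colored Magnus generator (citing \cite{B} and \cite{Con} for the definition), and hence that restricting to the color-preserving subgroup gives the Gassner representation. Your proposal spells out exactly this identification by computing the Fox--Jacobian block and matching it with \eqref{mmag} under $t_k=z_k^{-4}$, which is more than the paper itself does; the caveats you raise about conventions and the diagonal rescaling are the only content one has to check, and you have identified them correctly.
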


\section{Multivariable Alexander-Conway polynomial}

In this section we will use Theorem \ref{intertw}
to obtain the Alexander-Conway polynomial of a knot in terms
of $R$-matrices for quantum $\fgl(1|1)$. These results are very closely related to the results in \cite{KS} and \cite{M}.

\subsection{} To construct invariants of links and tangled graphs let
us start with the explicit decomposition of the two-folded tensor
product. We abbreviate $$\gamma:=\left({z_1^2z_2^2-z_1^{-2}z_2^{-2}}\right)^{-1},$$ (assuming from now on this inverse exists). The following linear maps explicitly describe the decomposition of the
tensor product of two generic irreducible two-dimensional representations:
\[
\varphi:\quad V_{z_1z_2, n+m}\oplus V_{z_1z_2, n+m+1} \to V_{z_1,n}\otimes V_{z_2,m}
\]
and
\[
\psi:\quad V_{z_1,n}\otimes V_{z_2,m}\to V_{z_1z_2, n+m}\oplus V_{z_1z_2, n+m+1}
\]
We denote by $w_1, Xw_1$ (resp. $w_2, Xw_2$) the standard basis in $V_{z_1z_2, n+m+1}$ and in  $V_{z_1z_2, n+m}$, and by $v_1, Xv_1$ (resp. $v_2, Xv_2$) the standard basis in $V_{z_1, n}$ and in  $V(z_2, m)$, respectively. Then the maps are defined as follows:

\begin{eqnarray*}
\varphi (w_1)&=&v_1\otimes v_2, \\
\varphi (Xw_1)&=& z_2 Xv_1\otimes v_2+(-1)^nz_1^{-1}v_1\otimes Xv_2, \\
\varphi (w_2)&=&(-1)^{n+1}{z_1^{-1}}(z_2^2-z_2^{-2})\gamma
Xv_1\otimes v_2+ z_2(z_1^2-z_1^{-2})\gamma v_1\otimes Xv_2, \\
\varphi (Xw_2)&=&Xv_1\otimes Xv_2
\end{eqnarray*}
and
\begin{eqnarray*}
\psi(v_1\otimes v_2)&=&w_1,\\
\psi(Xv_1\otimes v_2)&=&z_2(z_1^2-z_1^{-2})\gamma Xw_1+(-1)^{n+1}z_1^{-1} w_2,\\
\psi(v_1\otimes Xv_2)&=&(-1)^nz_1^{-1}(z_2^2-z_2^{-2})\gamma Xw_1+z_2w_2,\\
\psi(Xv_1\otimes Xv_2)&=&Xw_2
\end{eqnarray*}

One easily verifies that they are inverse to each other:
\[
\psi\circ \varphi=\operatorname{id}_{V\otimes V}, \ \ \varphi\circ \psi =\operatorname{id}_{V\otimes V}
\]

Let $P_0, P_1\in \operatorname{End}(V_{z_1z_2, n+m}\oplus V_{z_1z_2, n+m+1})$
be the natural orthogonal projections to the first and the second summand respectively.

For any $A\in \operatorname{End}(M)$ with $M$ an arbitrary super space, define the super trace to be
$\operatorname{str}(A)$ to be the trace of $A$ restricted to the even part of $M$ minus the trace of $A$ restricted to the odd part of $M$. For instance, if $M=V$, then $\operatorname{str}(A)=A_{v,v}-A_{Xv,Xv}$ where $v, Xv=u$ is the weight basis in $V$.

We have the following identities for the super traces:
\begin{eqnarray}
\operatorname{str}_2(\phi P_0 \psi)&=& z_2^2(z_1^2-z_1^{-2})\gamma\operatorname{id}_{V(z_1,n)}, \nonumber\\
\operatorname{str}_2(\phi P_1 \psi)&=& -z_2^2 (z_1^2-z_1^{-2})\gamma\operatorname{id}_{V(z_1,n)}, \nonumber\\\
\operatorname{str}_1(\phi P_0 \psi)&=& z_1^{-2}(z_2^2-z_2^{-2})\gamma\operatorname{id}_{V(z_2,n)}, \nonumber\\\
\operatorname{str}_1(\phi P_1 \psi)&=& -z_1^{-2} (z_2^2-z_2^{-2})\gamma\operatorname{id}_{V(z_2,n)}\label{ambi-proj},
\end{eqnarray}
Here $\operatorname{str}_{1,2}$ are partial super traces:
\[
\operatorname{str}_2(a\otimes b)=a\operatorname{str}(b), \ \
\operatorname{str}_1(a\otimes b)=\operatorname{str}(a)b
\]
The matrix $PR^{(z,z)}$ has the spectral decomposition:
\[
PR^{(z,z)}=z^2\phi P_0 \psi-z^{-2} \phi P_1 \psi.
\]

We also have
\begin{equation}\label{crossing}
\operatorname{str}_2(PR^{(z,z)})=z^2 \operatorname{id}, \ \ \operatorname{str}_2((PR^{(z,z)})^{-1})=z^2 \operatorname{id},
\end{equation}
and it is easy to check that these identities agree
with the spectral decomposition and super trace identities above.

\subsection{}

Let $\pi$ be the representation from above and $\beta$ a braid. The partial trace $\operatorname{tr}_{23\dots N}(\pi(\beta))$ is the evaluation of a central element in $U_h(\fgl(1|1))$ in the irreducible representation $V_{z_1,n_1}$. This is a general fact
about the construction of link invariants from quasitriangular
Hopf algebras \cite{RT}, \cite[Definition 2.1]{GeerP}. Therefore this partial trace is proportional to
the identity. We will write
\[
\operatorname{str}_{23\dots N}(\beta)=\langle\operatorname{str}_{23\dots N}(\pi(\beta))\rangle I_1
\]
where $I_1$ is the identity operator in $V(z_1,n_1)$.

\begin{theorem}\label{trace}
Abbreviating $z=z_1\dots z_N$, the following holds:
  \begin{eqnarray*}
    \langle \operatorname{str}_{23\dots N}(\pi(\beta))\rangle
    &=&{z_1^2-z_1^{-2}\over z^2-z^{-2}}z^2\sum_{m=0}^{N-1}
    \operatorname{tr}_{\wedge^m W}(\pi_W(\beta))
  \end{eqnarray*}
\end{theorem}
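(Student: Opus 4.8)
The plan is to compute the super trace $\operatorname{str}_{23\dots N}(\pi(\beta))$ by exploiting the decomposition of $V(\Bz,\Bn)$ from Proposition~\ref{intertw} together with the fact, recalled in Section~4.2, that $\pi(\beta)$ commutes with the $\gl$-action and hence acts blockwise according to that decomposition. Concretely, under \eqref{decomp} the representation splits as $\bigoplus_{m=0}^{N-1}\sideset{}{^m}\bigwedge W\otimes \Pi^m V_{z,\lambda+m}$, and on each summand $\pi(\beta)$ acts as $\pi_W(\beta)\otimes \operatorname{id}$ on the multiplicity factor tensored with the identity on $V_{z,\lambda+m}$ (this is the content of ``the action is determined by the action on multiplicity spaces''). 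The first step is therefore to rewrite the full super trace $\operatorname{str}_{23\dots N}$ as a sum over $m$ of an ordinary trace on $\sideset{}{^m}\bigwedge W$ against a partial super trace taken on the two-dimensional simple factor $V_{z,\lambda+m}$.

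The second step is to evaluate that partial super trace on each simple factor. Here I would use exactly the crossing identities \eqref{crossing} and the projector super-trace identities \eqref{ambi-proj}: the point of Section~4.1 is precisely to package how a single $R$-matrix contributes a scalar upon taking a partial super trace over one tensor slot. Iterating the partial super trace $\operatorname{str}_{23\dots N}$ down a braid $\beta$ acting on $V_{z,\lambda+m}$ collapses, by the general Reshetikhin--Turaev mechanism cited in Section~4.2, to evaluation of a central element in the single factor $V_{z_1,n_1}$, yielding a scalar. The key computation is to show that this scalar is $\dfrac{z_1^2-z_1^{-2}}{z^2-z^{-2}}\,z^2$, independent of $m$, so that it factors out of the sum over $m$ and leaves precisely $\sum_{m=0}^{N-1}\operatorname{tr}_{\wedge^m W}(\pi_W(\beta))$. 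The parity shifts $\Pi^m$ interact with the \emph{super} trace to convert what would be a super trace on the multiplicity spaces into an ordinary trace, which is why the right-hand side features $\operatorname{tr}$ rather than $\operatorname{str}$; checking this sign bookkeeping carefully is essential.

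The main obstacle I expect is controlling the normalization scalar coming from the two-dimensional factor and verifying it is genuinely $m$-independent. The subtlety is that the simple factor in the $m$-th summand is $V_{z,\lambda+m}$, whose highest weight shifts with $m$, yet the claimed prefactor $\frac{z_1^2-z_1^{-2}}{z^2-z^{-2}}z^2$ does not. I would resolve this by tracking how the super trace over the even/odd parts of $V_{z,\lambda+m}$ depends only on $z=z_1\cdots z_N$ and the local data at the first strand, using that the eigenvalues contributed by $G$ enter the super trace as a difference $A_{v,v}-A_{Xv,Xv}$ (cf.\ the definition of $\operatorname{str}$), so that the $G$-weight shift $\lambda+m$ cancels between the two basis vectors $v,\ Xv$ of each simple factor. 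The factor $\frac{z_1^2-z_1^{-2}}{z^2-z^{-2}}$ should emerge as the ratio comparing the normalization built into $\pi$ (the prefactor $-z_1^{-3}z_{i+1}^{-1}$ in \eqref{pi}) against the intrinsic quantum dimension $z^2-z^{-2}$ of $V_{z,\lambda+m}$; pinning this down via \eqref{ambi-proj} applied at the first tensor slot is the technical heart of the argument.
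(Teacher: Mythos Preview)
Your proposal is correct and follows essentially the same route as the paper: use the decomposition of Proposition~\ref{intertw} to write $\pi(\beta)$ blockwise as $\pi_W(\beta)\otimes\operatorname{id}$ on each $\bigwedge^m W\otimes \Pi^m V_{z,\lambda+m}$, then invoke the explicit two-factor decomposition and the partial super-trace identities \eqref{ambi-proj} from Section~4.1 to extract the common scalar $\frac{z_1^2-z_1^{-2}}{z^2-z^{-2}}z^2$. The paper's proof is in fact terser than yours---it simply cites Proposition~\ref{intertw} and the formulae \eqref{ambi-proj} without spelling out the $m$-independence or the parity bookkeeping that converts $\operatorname{str}$ to $\operatorname{tr}$---so your more detailed treatment of those points is a welcome elaboration rather than a deviation.
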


\begin{proof}
This theorem follows immediately from Proposition \ref{intertw}. The decomposition of the  tensor product $V(\Bz,\Bn)$ defines linear maps $f_m: \wedge^m W\to \wedge^m W$ for each element $f\in \operatorname{End}(V(\Bz,\Bn))$ . Using the explicit formulae for the decomposition
of two irreducible $2$-dimensional representations from
the previous section and the formulae for partial traces $\operatorname{str}_a(\phi P_b \psi)$
from the previous subsection we arrive to the identity:
\[
\langle \operatorname{str}_{23\dots N}(f)\rangle=
\sum_{m=0}^{N-1} \operatorname{tr}_{\wedge^nW}(f_m){z_1^2-z_1^{-2}\over z^2-z^{-2}}z^2
\]

\end{proof}

Let $\widehat{\beta}$ be the link which is the closure
of the braid $\beta$. We number its connected components by $1\leq i\leq k$ and denote by $w_i(\beta)$ the winding number of
the $i$-th component of $\widehat{\beta}$. Then the following holds:

\begin{theorem}The function
\begin{equation}\label{inv-k}
\tau(\beta)=\langle \operatorname{tr}_{23\dots N}(\beta)\rangle z^{2\sum_{i=1}^k w_i(\beta)}
\end{equation}
is an invariant of the link $\widehat{\beta}$.
\end{theorem}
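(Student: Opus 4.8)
The plan is to invoke Markov's theorem and reduce the isotopy invariance of $\tau$ to its invariance under the two Markov moves, then to check each move separately. Recall that the closures of two colored braids are isotopic as colored links precisely when the braids are related by a finite sequence of (I) conjugations $\beta\mapsto\gamma\beta\gamma^{-1}$ inside a fixed $\Br_\Bz$, and (II) stabilizations $\beta\mapsto\beta\beta_N^{\pm1}$, where $\beta$ is regarded inside the braid group on one more strand and the new strand $N+1$ carries the same color $(z_{N+1},n_{N+1})$ as the strand it merges with upon closure. Thus it suffices to show that the product $\langle\operatorname{tr}_{23\dots N}(\beta)\rangle\, z^{2\sum_i w_i(\beta)}$ is unchanged under (I) and (II); here $z=z_1\cdots z_N$ itself changes under (II), which is why the two factors must be analyzed together rather than separately.

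Invariance under Markov move (I) is immediate from Theorem \ref{trace}. That theorem identifies $\langle\operatorname{tr}_{23\dots N}(\beta)\rangle$ with the color-dependent scalar $\tfrac{z_1^2-z_1^{-2}}{z^2-z^{-2}}z^2$ times $\sum_{m=0}^{N-1}\operatorname{tr}_{\wedge^mW}(\pi_W(\beta))$. By Propositions \ref{Burau} and \ref{intertw} the map $\pi_W$ and its exterior powers are genuine linear representations of $\Br_\Bz$ on the fixed finite-dimensional spaces $\wedge^m W$, so each trace satisfies $\operatorname{tr}_{\wedge^mW}(\pi_W(\gamma\beta\gamma^{-1}))=\operatorname{tr}_{\wedge^mW}(\pi_W(\gamma)\pi_W(\beta)\pi_W(\gamma)^{-1})=\operatorname{tr}_{\wedge^mW}(\pi_W(\beta))$. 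The prefactor depends only on the colors and on $N$, all preserved by conjugation, while the winding numbers $w_i$ and $z$ depend only on the link $\widehat\beta=\widehat{\gamma\beta\gamma^{-1}}$. Hence $\tau$ is constant on conjugacy classes.

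The substance of the proof is invariance under stabilization (II), which also absorbs the framing (Reidemeister I) anomaly. Since $\beta$ acts as the identity on the new strand and commutes with the partial trace over it, I would first establish $\operatorname{str}_{23\dots N+1}(\pi(\beta\beta_N^{\pm1}))=\operatorname{str}_{23\dots N}\big(\pi(\beta)\cdot\operatorname{str}_{N+1}(\pi(\beta_N^{\pm1})(a,b))\big)$, where $a=(z_N,n_N)$ and $b=(z_{N+1},n_{N+1})$. By the definition \eqref{pi} and the super-trace identities \eqref{crossing}, the inner partial trace $\operatorname{str}_{N+1}(\pi(\beta_N^{\pm1})(a,b))$ is a scalar $c^{\pm}$, an explicit signed power of the common color $\zeta:=z_N=z_{N+1}$ read off from the normalization constant $-z_N^{-3}z_{N+1}^{-1}$ together with $\operatorname{str}_2(PR^{(\zeta,\zeta)})=\operatorname{str}_2((PR^{(\zeta,\zeta)})^{-1})=\zeta^2$, times the identity on strand $N$. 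Consequently $\langle\operatorname{tr}_{23\dots N+1}(\beta\beta_N^{\pm1})\rangle=c^{\pm}\,\langle\operatorname{tr}_{23\dots N}(\beta)\rangle$, once the shift $N\mapsto N+1$, $z\mapsto z\zeta$ in the prefactor of Theorem \ref{trace} is taken into account.

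It then remains to match this against the correction factor: stabilization by $\beta_N^{\pm1}$ changes the self-winding number of the affected component by $\pm1$ and replaces $z$ by $z\zeta$, so $z^{2\sum_i w_i}$ is multiplied by an explicit monomial in $\zeta$ and $z$, and the normalization chosen in \eqref{pi} is precisely the one for which $c^{\pm}$ cancels this monomial, giving $\tau(\beta\beta_N^{\pm1})=\tau(\beta)$. I expect the main obstacle to be exactly this final bookkeeping: one must verify that the scalar produced by the partial trace, including the normalization $-z_N^{-3}z_{N+1}^{-1}$ and the $N$-dependent prefactor $\tfrac{z_1^2-z_1^{-2}}{z^2-z^{-2}}z^2$ that itself shifts under $N\mapsto N+1$, cancels exactly against both the winding change $\zeta^{\pm2}$ and the rescaling $z\mapsto z\zeta$, and that this occurs for both the positive and the negative stabilization. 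This is a finite explicit computation, but it is the crux of the statement and the place where the particular scalings in \eqref{pi} and \eqref{inv-k} are forced.
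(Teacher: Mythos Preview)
Your approach is correct and matches the paper's: both invoke Markov's theorem, derive invariance under conjugation from Theorem~\ref{trace} together with the conjugation invariance of the ordinary trace, and derive invariance under stabilization from the partial super-trace identities~\eqref{crossing}. The only minor difference is that for Markov~II you route the computation through the prefactor of Theorem~\ref{trace} and its shift under $N\mapsto N+1$, whereas the paper (and the cleaner version of your own argument) works directly with the scalar $\langle\operatorname{str}_{23\dots N}(\pi(\beta))\rangle$ and the identity $\operatorname{str}_{N+1}(\pi(\beta_N^{\pm1}))=c^{\pm}\operatorname{id}$ coming from~\eqref{crossing}; Theorem~\ref{trace} is not needed for the stabilization step, so invoking it there only adds bookkeeping that then has to cancel.
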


\begin{proof}
  We have to verify the invariance with respect to Markov moves.  The
  invariance with respect to the first Markov move means $\tau(\sigma
  \beta \sigma^{-1})=\tau(\beta)$. But this identity follows
  immediately from the conjugation invariance of the ordinary trace and Theorem
  \ref{trace}.
  The second Markov move means that $\tau(\beta
  s_{n-1}^{\pm 1})=\tau(\beta)$ where $\beta$ is a braid which has no
  factors $s_{n-1}^{\pm 1}$. But this identity follows immediately
  from the property (\ref{crossing}) of $R$-matrices.
\end{proof}

As it was shown in \cite{M} the invariant (\ref{inv-k}) is the Alexander-Conway polynomial $\Delta_{z_1,\dots, z_N}$:
\[
\langle \operatorname{tr}_{23\dots N}(\pi(\beta))\rangle ={z_1^2-z_1^{-2}\over z^2-z^{-2}}z^2
\Delta_{z_1,\dots, z_N}(\widehat{\beta})
\]

\begin{remark}{\rm
For any $\gl$-linear map $f\in \operatorname{End}(V(z_1,n_1)\otimes V(z_2,n_2))$ we have the following identity.
\[
z_1^{-2}\operatorname{str}_2(f)=z_2^2\operatorname{str}_1(f)
\]
This follows immediately from \eqref{ambi-proj}.
This property is a projective version of the ambidextrous \cite{ambi}  property of $\gl$-modules. Using this
formula  the Alexander-Conway polynomial in
terms of $\gl$ can be written as a state sum and
as a state sum, it can be extended to invariants of framed graphs (see \cite[Section 3]{KS})}
\end{remark}

\end{document}